\numberwithin{equation}{section}
\newtheorem{thm}{Theorem}[section]
\newtheorem{lem}[thm]{Lemma}
\newtheorem{prop}[thm]{Proposition}
\newtheorem{cor}[thm]{Corollary}
\newtheorem{df}[thm]{Definition}
\newcommand{\E}{\mathbb{E}}
\newcommand{\prob}{\mathbb{P}}
\newcommand{\R}{\mathbb{R}}
\newcommand{\Z}{\mathbb{Z}}
\newcommand{\N}{\mathbb{N}}
\newcommand{\distrib}{\overset{\mbox{$\mathcal D$}}{=}}
\newcommand{\hilbert}{\mathcal{H}}
\def\be{\begin{equation} }
\def\ee{\end{equation} }
\begin{document}
\date{September 3, 2008}
\title{A Remark on the Infinite-Volume Gibbs Measures of Spin Glasses}
\author{\Large Louis-Pierre Arguin \\[2ex] 
Weierstrass Institute for Applied Analysis and Stochastics
\\Berlin, Germany
}  

\maketitle  
\begin{abstract}
In this note, we point out that infinite-volume Gibbs measures of spin glass models on the hypercube can be identified as random probability measures on the unit ball of a Hilbert space. This simple observation follows from a result of Dovbysh and Sudakov on weakly exchangeable random matrices. 
Limiting Gibbs measures can then be studied as single well-defined objects. 
This approach naturally extends the space of Random Overlap Structures as defined by Aizenman, Sims and Starr. We discuss the Ruelle Probability Cascades and the stochastic stability within this framework. As an application, we use an idea of Parisi and Talagrand to prove that if a sequence of finite-volume Gibbs measures satisfies the Ghirlanda-Guerra identities, then the infinite-volume measure must be singular as a measure on a Hilbert space.
\end{abstract}

\section{The Gibbs Measure of Spin Glasses}
A spin glass on $N$ spins is a collection of Gaussian variables or Hamiltonians $$(H_N(\sigma),\sigma\in\{-1,1\}^N)$$ whose covariance is function of the overlap between configurations. For simplicity, we restrict ourselves to mean-field systems defined on the hypercube though the framework proposed here has a more general scope. For example, the Sherrington-Kirkpatrick (SK) model corresponds to the case where the Hamiltonians have covariance $\E[H_N(\sigma)H_N(\sigma')]=N\ q^2_{\sigma\sigma'}$ where the overlap $q_{\sigma\sigma'}=\frac{1}{N}\sum_{i=1}\sigma_i\sigma_i'$ is related to the Hamming distance on the hypercube by $\#\{i:\sigma_i\neq \sigma_i'\}= \frac{N}{2}(1-q_{\sigma\sigma'})$.  A goal is to understand the Gibbs measure $\mu_{\beta,N}$ of the model for $\beta\geq 0$ in the limit $N\to\infty$ where
\be
\mu_{\beta,N}:=\left(\frac{e^{-\beta H_N(\sigma)}}{\sum_{\sigma'}e^{-\beta H_N(\sigma')}}, \ \sigma\in\{-1,+1\}^N\right)\ .
\label{eqn:gibbs}
\ee

The convergence of the Gibbs measure is usually understood as the convergence of the appropriate observables of the model. However, it is not clear {\it a priori} that the collection of limiting observables are functionals of a non-trivial single object amenable itself to analysis.
This is to be compared with the classical spin systems on the lattice $\Z^d$. In this case, the limiting measure is well-defined as a probability measure on $\{-1,+1\}^{\Z^d}$ equipped with the $\sigma$-algebra generated by functions supported on a finite box. And limiting Gibbs measures, that are also translation-invariant, are characterized as optimizers of the Gibbs variational principle for the free energy \cite{Simon}.

It would be desirable to obtain such a transparent framework for mean-field spin glasses. A definition for the infinite-volume measure which is well-suited when the Hamiltonians exhibit a hierarchical structure is proposed in \cite{Bovier}. In the general case, Aizenman, Sims and Starr introduced the notion of {\it Random Overlap Structures} or ROSt's \cite{AS22}. A ROSt is a random pair $(\xi,Q)$ where $\xi=(\xi_i,i\in\N)$ is a collection of summable weights and $Q=\{q_{ij}\}$ is a symmetric positive semi-definite matrix with $q_{ii}=1$. For instance, the Gibbs measure \eqref{eqn:gibbs} is a ROSt. They prove a variational principle for the free energy of the SK model over the space of ROSt's. However, the current definition does not specify the topology of the space and therefore the appropriate definition of convergence of the measure. Moreover, as pointed out by the authors in \cite{AS22}, one should not rule out the possibility that the limiting ROSt of a model admits continuous weight $\xi$. Thus the space of ROSt's should be enlarged to include such cases.

The purpose of this communication is to observe that, for mean-field spin glasses, the collection of observables in the thermodynamic limit defines a random probability measure on the unit ball of a separable Hilbert space $\hilbert$. The space $\mathcal{M}$ of such elements is compact and Polish when endowed with the topology coming from the sampling of replicas. The space $\mathcal{M}$ becomes a natural extension of the space of ROSt's, the latter corresponding to those measures supported on a countable set of vectors of norm $1$.

The idea is the following. Observables on mean-field spin glasses are obtained by sampling configurations with the measure \eqref{eqn:gibbs}, taking overlaps and averaging over the randomness e.g. for some integers $k_1,...,k_{s(s-1)/2}$
$$\overline{\mu_{\beta,N}^{(s)}}(q_{12}^{k_1}...q_{s-1,s}^{k_{s(s-1)/2}}) $$
where we denote the product measure on $s$ replicas by $\mu^{(s)}$, the overlap between the $i$-th and the $j$-th sampled configurations by $q_{ij}$ and the average over the randomness of the Hamiltonians by $\overline{\mu}$. Since $q_{\sigma\sigma}=1$ and the overlap matrix is positive definite, the measure \eqref{eqn:gibbs} can be thought of as a random probability measure on the unit sphere of $\hilbert$ by simply embedding the hypercube isometrically in $\hilbert$. The overlap between two configurations is then the inner product between the vectors and the randomness is the one induced by the random Hamiltonian. Since the overlaps take value in $[-1,1]$, compactness ensures the existence of a subsequence of $\mu_{\beta,N}$ for which every observable of the above sort converges \cite{AC}. It turns out that these limiting observables come also from the sampling of a single random measure when measures on the whole unit ball are considered - thanks to a result of Dovbysh and Sudakov on exchangeable matrices \cite{DS} (c.f. Theorem \ref{thm:DS} and Proposition \ref{prop weak limit}).

The interest of the above observation is the study of the infinite-volume Gibbs measure as a single well-defined object, a random probability measure on $\hilbert$. This perspective raises new tantalizing questions besides the well-known conjecture on the ultrametricity of the overlaps.
For example, do the limiting Gibbs measures admit a continuous part ? If they are actually singular, are they pure point ? Are the measures supported on a non-random sphere ? Etc.

The structure of the paper is as follows. The details of the construction of the space $\mathcal{M}$  are presented in Section \ref{section we}. We also explain how the Gibbs measure at high temperature and the important examples of the Ruelle Probability Cascades fit naturally in this framework c.f. Proposition \ref{prop:rpc}. Some properties of the limiting measure are studied in Section \ref{section gg}. We start by discussing how the so-called stochastic stability property of the Gibbs measure, as studied in \cite{AC,Contucci, GG, Guerra_cavity}, is expressed by the infinite-volume Gibbs measure. In particular, we look at the consequences of the Ghirlanda-Guerra identities on the limiting object. Following the idea of Parisi and Talagrand \cite{Parisi}, we show that measures satisfying them must be singular in a suitable sense c.f. Corollary \ref{cor:gg}.

\section{Gibbs Measure and Weakly Exchangeable Matrix}
\label{section we}
\subsection{Definitions}
In this section, we first define the space of random overlap matrices that are weakly exchangeable. We then present the theorem of Dovbysh and Sudakov that characterizes such matrices in terms of random probability measures on a separable Hilbert space. The existence of infinite-volume Gibbs measures is a straightforward consequence of the latter.

The set of positive semi-definite symmetric $\N\times\N$ matrices with $1$ on the diagonal is plainly a compact Polish space when considered as a closed subset of $[-1,1]^{\N\times\N}$ equipped with the product topology. We call an element of this space an {\it overlap matrix}. A {\it random overlap matrix} is a Borel probability measure on the space of overlap matrices. This space is also compact and Polish when equipped with the weak-$*$ topology induced by the continuous functions on overlap matrices. Any continuous functional in this topology can be approximated by linear combinations of monomials of the form
\be
\E\left[\prod_{1\leq i<j\leq s}q_{ij}^{k_{ij}}\right]
\label{eqn:fct}
\ee
for some random overlap matrix $Q=\{q_{ij}\}$ of law $\prob$, $s\in\N$ and $k_{ij}\in\N$. 
\begin{df} 
A random overlap matrix $Q$ is said to be weakly exchangeable if for any permutation matrix $\tau$ on finite elements
$$ \tau \ Q \ \tau^{-1}\ \distrib Q\ .$$
We denote the space of weakly exchangeable random overlap matrices by $\mathcal{M}$. 
\end{df}
The space $\mathcal{M}$ is a compact convex subset of the space of random overlap matrices. By Choquet's theorem, any element can be written as the barycenter of the extreme elements $\text{Ext} \ \mathcal{M}$.
The characterization of the extremes, and hence of the whole set, was achieved by Dovbysh and Sudakov \cite{DS}.

\begin{thm}[Dovbysh-Sudakov]
Let $\hilbert$ be an infinite-dimensional separable Hilbert space. $\text{Ext} \ \mathcal{M}$ is in bijection with the set of probability measures on the closed unit ball of $\hilbert$, considered up to isometry. 
\label{thm:DS}
\end{thm}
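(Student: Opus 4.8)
The plan is to exhibit the bijection explicitly and then verify that it is well defined, injective and surjective. All three steps rest on the Aldous--Hoover representation theory for weakly exchangeable arrays; the \emph{positive semi-definiteness} of the overlap matrix enters decisively only in the surjectivity step, which is the heart of the matter.

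\emph{The forward map.} Given a Borel probability measure $\nu$ on the closed unit ball $B$ of $\hilbert$, let $(h_i)_{i\in\N}$ be i.i.d.\ with law $\nu$ and put $q_{ij}=(h_i,h_j)_\hilbert$ for $i\neq j$ and $q_{ii}=1$. Then $Q=\{q_{ij}\}$ is almost surely an overlap matrix: each finite principal block decomposes as the Gram matrix $\big((h_i,h_j)\big)_{i,j\leq n}$ plus the nonnegative diagonal matrix $\mathrm{diag}\big(1-\|h_i\|^2\big)$, both positive semi-definite since $\|h_i\|\leq 1$, and its diagonal is $1$. Its law $\prob_\nu$ is weakly exchangeable because $(h_i)$ is i.i.d. Since inner products are invariant under orthogonal transformations of $\hilbert$ and any two infinite-dimensional separable Hilbert spaces are linearly isometric, $\prob_\nu$ depends only on the isometry class of $\nu$, so $\nu\mapsto\prob_\nu$ is a well-defined map from such classes into $\M$. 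Its image lies in $\mathrm{Ext}\,\M$: in the Aldous--Hoover representation of $Q$ the directing random measure is the \emph{deterministic} measure $\nu$, or equivalently, by the Hewitt--Savage $0$--$1$ law applied to $(h_i)$, every finite-permutation-invariant event has probability $0$ or $1$; such ergodic laws are exactly the extreme points of the convex set $\M$.

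\emph{Injectivity.} Suppose $\prob_\nu=\prob_{\nu'}$. For a fixed index $i$ the sequence $(q_{ik})_{k\neq i}$ is exchangeable, so by de Finetti and the strong law of large numbers the averages $\tfrac1n\sum_{k\leq n}q_{ik}q_{jk}$ converge almost surely; iterating such averages along independent blocks of rows one reconstructs, from the off-diagonal array alone, the law of the \emph{full} Gram matrix $\big((h_i,h_j)\big)_{i,j\in\N}$ of an i.i.d.\ $\nu$-sample. Since the law of the Gram matrix of an i.i.d.\ Hilbert-space sample determines the sampling measure up to an orthogonal transformation, $\nu$ and $\nu'$ are isometric.

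\emph{Surjectivity, the main obstacle.} Let $Q\in\mathrm{Ext}\,\M$. By the Aldous--Hoover representation for weakly exchangeable arrays there are, on an enlarged probability space, independent i.i.d.\ uniform$[0,1]$ families $w$, $(\xi_i)_i$, $(\xi_{\{i,j\}})_{i<j}$ and a measurable $\sigma$, symmetric in its two vertex arguments, with $q_{ij}=\sigma(w,\xi_i,\xi_j,\xi_{\{i,j\}})$ a.s.; extremality forces $w$ to be a.s.\ constant, so $q_{ij}=\sigma(\xi_i,\xi_j,\xi_{\{i,j\}})$ for $i\neq j$. The crucial claim is that positive semi-definiteness eliminates the edge variable, i.e.\ $q_{ij}$ is, off the diagonal, a deterministic symmetric function $\kappa(\xi_i,\xi_j)$ of the two vertex labels; I would try to derive this by averaging $q_{ik}q_{jk}$ over a long independent block of indices $k$ and comparing the limit against the Cauchy--Schwarz (Schur-complement) bounds forced by positive semi-definiteness of the finite principal blocks, which should make $\mathrm{Var}(q_{ij}\mid\xi_i,\xi_j)$ vanish. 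Granting this, letting $n\to\infty$ in $\big(\kappa(\xi_i,\xi_j)\big)_{i\neq j,\,i,j\leq n}+I\succeq 0$ with $\xi_i$ i.i.d.\ uniform shows the integral operator $T_\kappa$ on $L^2[0,1]$ is positive, so Mercer's theorem gives $\kappa(s,t)=\sum_m\lambda_m e_m(s)e_m(t)$ with $\lambda_m\geq 0$; then $\phi(s):=(\sqrt{\lambda_m}\,e_m(s))_m\in\ell^2\cong\hilbert$ satisfies $q_{ij}=(\phi(\xi_i),\phi(\xi_j))$ for $i\neq j$, and $|q_{ij}|\leq 1$ forces $\|\phi(s)\|^2=\kappa(s,s)\leq 1$ for a.e.\ $s$ (a Lebesgue-density argument, using that $\kappa$ may be taken approximately continuous), so $\phi$ is $B$-valued. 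The measure $\nu:=\phi_*(\mathrm{Leb}_{[0,1]})$ then realizes $\mathrm{law}(Q)=\prob_\nu$, and is determined by $Q$ up to isometry by the injectivity step. The genuine difficulty is the removal of the edge variable, where weak exchangeability, extremality and positive semi-definiteness must all be used together; the remainder is soft exchangeability theory and standard Hilbert-space analysis.
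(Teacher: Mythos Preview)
The paper does not prove this theorem: it is quoted as a result of Dovbysh and Sudakov with a reference to \cite{DS}, and the surrounding text only describes what the bijection is, not why it holds. So there is no ``paper's own proof'' to compare against; your attempt is an independent proof sketch of a cited external result.

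On its own merits, your approach via the Aldous--Hoover representation is a legitimate modern route to Dovbysh--Sudakov (and is the one taken, for instance, in later treatments by Panchenko and by Austin), and it is genuinely different from the original 1982 argument. Your forward map and the extremality argument via Hewitt--Savage are correct, and the injectivity sketch, while terse, points at the right mechanism.

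However, the surjectivity step contains a real gap that you yourself flag: the elimination of the edge variable $\xi_{\{i,j\}}$. Writing ``I would try to derive this by averaging $q_{ik}q_{jk}$ \ldots which \emph{should} make $\mathrm{Var}(q_{ij}\mid\xi_i,\xi_j)$ vanish'' is not a proof. The averaging you describe yields the conditional second moment $\E[q_{ij}^2\mid\xi_i,\xi_j]$ only after one already knows something linking $q_{ik}q_{jk}$ to $q_{ij}$; positive semi-definiteness gives inequalities (Schur complements), not the identity you need. The clean way to kill the edge variable is a replication trick: enlarge the array by introducing an index $i'$ with $\xi_{i'}=\xi_i$ but a fresh edge variable, and use positive semi-definiteness of the $3\times 3$ block on $\{i,i',j\}$ together with $q_{ii'}\le 1$ to force $q_{ij}=q_{i'j}$ almost surely, hence $q_{ij}$ is $\sigma(\xi_i,\xi_j)$-measurable. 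Without this (or an equivalent device) the argument does not close. A secondary technical point: invoking Mercer's theorem for a merely measurable kernel $\kappa$ is not justified; you need the spectral theorem for the positive Hilbert--Schmidt operator $T_\kappa$ on $L^2[0,1]$ instead, and the bound $\kappa(s,s)\le 1$ requires a separate Lebesgue-point argument rather than following directly from $|q_{ij}|\le 1$.
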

The correspondence established by Dovbysh and Sudakov works  as follows. The law of the non-diagonal entries of $Q\in \text{Ext} \ \mathcal{M}$ is the law of the Gram matrix constructed from iid $\mu$-sampled vectors where $\mu$ is a probability measure on the unit ball of $\hilbert$. Precisely, for $i\neq j$
$$ q_{ij}\distrib (v_i,v_j)_{\hilbert}$$
where $(v_i,i\in\N)$ is a sequence of iid $\mu$-distributed vectors. We stress that, even though $(v_i,v_i)_\hilbert$ can be strictly smaller than $1$, $q_{ii}=1$ for all $i$. It is clear that two measures $\mu$ and $\mu'$ which differ by an isometry of $\hilbert$, $\mu'=\mu\circ U^{-1}$ for some isometry $U$, yield the same law. Conversely, if no such isometry exists, they produce two different elements of $\mathcal{M}$. A simple element of $\text{Ext}\ \mathcal{M}$ is the identity matrix $Q=\text{Id}$ a.s. In this case, the corresponding $\mu$ is the delta measure on the $0$ vector. 

From this perspective, since $\mathcal{M}$ is the closed convex hull of $\text{Ext} \ \mathcal{M}$, the whole set is in correspondence with the convex combinations of probability measures, or random probability measures, on the unit ball of $\hilbert$. We call the random measure associated with the weakly exchangeable matrix $Q$ {\it the directing measure of $Q$}. Similarly as above, we write $\mu^{(s)}$ for the product measure of $s$ copies of $\mu$, $\mu^{(s)}:=\mu\times ...\times \mu$, and $\overline{\mu^{(s)}}$ when it is averaged over the randomness of $\mu$.

From now on, we identify $\mathcal{M}$ with the set of random probability measure on the unit ball of $\hilbert$, considered up to isometry. The topology on $\mathcal{M}$ naturally translates into the topology given by sampled replicas or copies of the directing measure. Namely, if $Q$ is weakly exchangeable with directing measure $\mu$, \eqref{eqn:fct} becomes
\be
\overline{\mu^{(s)}}\left(\prod_{1\leq i<j\leq s}q_{ij}^{k_{ij}}\right)
\label{eqn:h}
\ee
The above considerations translate into a simple lemma.
\begin{lem}
The space $\mathcal{M}$ of random probability measure on $\hilbert$ considered up to isometry is a compact Polish space when equipped with the topology generated by the linear span of functions of the form \eqref{eqn:h}.
\end{lem}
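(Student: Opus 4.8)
The plan is to exploit that $\M$ carries two descriptions: as the set of weakly exchangeable random overlap matrices, and — via Theorem \ref{thm:DS} — as the set of random probability measures on the unit ball of $\hilbert$ up to isometry. I would first prove compactness and the Polish property for the former, which sits as a closed subset of an obviously compact Polish space, then check that its (weak-$*$) topology is the one generated by the functionals \eqref{eqn:fct}, and finally transport everything across the Dovbysh--Sudakov bijection, under which \eqref{eqn:fct} becomes \eqref{eqn:h}.

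\emph{Step 1: the ambient space.} The set $\mathcal{Q}$ of overlap matrices is a closed subset of $[-1,1]^{\N\times\N}$ with the product topology, since positive semi-definiteness and having $1$ on the diagonal are both preserved under coordinatewise limits; hence $\mathcal{Q}$ is compact and metrizable. Consequently the space $\mathcal{P}(\mathcal{Q})$ of random overlap matrices, with the weak-$*$ topology, is itself compact and metrizable, i.e.\ compact Polish. \emph{Step 2: $\M$ is closed.} For each permutation matrix $\tau$ on finitely many coordinates the map $Q\mapsto \tau Q\tau^{-1}$ is a homeomorphism of $\mathcal{Q}$, so the induced push-forward $\prob\mapsto \prob\circ(\tau\cdot\tau^{-1})^{-1}$ is weak-$*$ continuous on $\mathcal{P}(\mathcal{Q})$; since the weak-$*$ topology is Hausdorff, the set $\{\prob:\prob\circ(\tau\cdot\tau^{-1})^{-1}=\prob\}$ is closed, and intersecting over the countably many finite permutations shows $\M$ is closed in $\mathcal{P}(\mathcal{Q})$. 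A closed subspace of a compact Polish space is compact Polish, which gives the first half of the claim.

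\emph{Step 3: the generating functionals.} By the Stone--Weierstrass theorem the polynomials in the coordinate functions $q_{ij}$ separate points of $\mathcal{Q}$ and contain the constants, hence are dense in $C(\mathcal{Q})$; since $\prob\mapsto\int f\,d\prob$ depends uniformly continuously on $f$, the weak-$*$ topology is the coarsest one making $\prob\mapsto\int f\,d\prob$ continuous as $f$ ranges over this dense algebra, and by linearity it is already generated by the monomials, i.e.\ by the functionals \eqref{eqn:fct}. Restricting to $\M$, its topology is generated by \eqref{eqn:fct}. \emph{Step 4: transport.} Theorem \ref{thm:DS} and the discussion following it provide a bijection between $\M$ and the random probability measures on the closed unit ball of $\hilbert$, up to isometry, under which, for a weakly exchangeable $Q$ with directing measure $\mu$, the functional \eqref{eqn:fct} equals the functional \eqref{eqn:h}. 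Declaring this bijection a homeomorphism carries over the compact Polish structure together with precisely the topology generated by the linear span of the functions \eqref{eqn:h}, which is the assertion.

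I do not expect a serious obstacle: the only points needing care are verifying that weak exchangeability is a closed condition (Step 2) and the density argument reducing the test functions to the monomials \eqref{eqn:fct} (Step 3), both of which are routine. I would also remark that passing "up to isometry" requires no extra work here, because the quotient is inherited through the bijection from the already-established compact Polish space $\M$ of weakly exchangeable matrices, rather than being constructed and then checked to be Polish.
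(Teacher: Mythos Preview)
Your proposal is correct and follows precisely the route the paper has in mind: the paper gives no separate proof of this lemma, presenting it as an immediate consequence of ``the above considerations,'' which are exactly your Steps~1--4 (overlap matrices form a compact Polish space, weak exchangeability is closed, the monomials \eqref{eqn:fct} generate the weak-$*$ topology, and the Dovbysh--Sudakov bijection transports this to \eqref{eqn:h}). You have simply supplied the details the paper leaves implicit.
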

We note that the space of ROSt's as described in \cite{AS22} is a subset of $\mathcal{M}$ corresponding to the directing measures that are supported on a countable number of vectors sitting on the unit sphere. We stress that, under the topology considered here, the norm of the vectors on which the directing $\mu$ is supported is not preserved under the convergence in $\mathcal{M}$. For example, the uniform measure on $N$ orthonormal vectors $e_i$, $\mu_N=\frac{1}{N}\sum_{i=1}^N\delta_{e_i}$, converges in $\mathcal{M}$ to the delta measure at $0$. 

Theorem \ref{thm:DS} ensures the existence of infinite-volume Gibbs measures of  mean-field spin glasses as the limit in $\mathcal{M}$ of finite Gibbs measures. 
\begin{prop}
Let $\mu_{\beta,N}$ be a Gibbs measure of the form \eqref{eqn:gibbs}. There exists a subsequence converging to some $\mu_{\beta}$ in $\mathcal{M}$. In particular, $\mu_\beta$ can be identified, up to isometry, to a random probability measure on the unit ball of $\hilbert$.            
\label{prop weak limit}
\end{prop}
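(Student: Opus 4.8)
The plan is to realize each finite-volume Gibbs measure $\mu_{\beta,N}$ as an element of $\mathcal{M}$ and then invoke the compactness of $\mathcal{M}$ together with Theorem~\ref{thm:DS}. First I would embed the hypercube isometrically into $\hilbert$ via $\sigma \mapsto \sigma/\sqrt{N} \in \R^N \subset \hilbert$, so that $\mu_{\beta,N}$ becomes a probability measure (random through the disorder $H_N$) supported on finitely many points of the unit sphere of $\hilbert$. Sampling an iid sequence of replicas $(\sigma^i)_{i\in\N}$ from $\mu_{\beta,N}$ and forming the Gram matrix $Q_N = \{q_{\sigma^i\sigma^j}\}_{i,j}$ produces a random matrix whose law $\prob_N$, after averaging over $H_N$, is a Borel probability measure on the space of overlap matrices: the entries lie in $[-1,1]$, the diagonal entries equal $1$ because $\sigma^i \in \{-1,1\}^N$, and the matrix is positive semi-definite since it is a Gram matrix. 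Because the replicas are iid conditionally on $H_N$, the law $\prob_N$ is invariant under any finite permutation $\tau$ of the replica indices, i.e.\ $\tau Q_N \tau^{-1} \distrib Q_N$. Hence $\prob_N \in \mathcal{M}$ for every $N$; equivalently, $\mu_{\beta,N}$, considered up to isometry, is an element of $\mathcal{M}$ whose directing measure is $\mu_{\beta,N}$ itself.

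Next, since $\mathcal{M}$ is compact and Polish by the preceding lemma, hence sequentially compact, the sequence $(\prob_N)_N$ admits a subsequence $(\prob_{N_k})_k$ converging in $\mathcal{M}$ to some $\prob_\beta$; concretely, every observable of the form \eqref{eqn:h} converges along $N_k$. The limit $\prob_\beta$ is again weakly exchangeable, since the permutation invariance is expressed through the monomials \eqref{eqn:fct} and is therefore preserved in the limit, so $\prob_\beta \in \mathcal{M}$. Writing $\prob_\beta$ as the Choquet barycenter of its extreme components and applying Theorem~\ref{thm:DS} to each extreme component, $\prob_\beta$ is the law of the Gram matrix of an iid sequence sampled from a random probability measure $\mu_\beta$ on the closed unit ball of $\hilbert$, uniquely determined up to isometry. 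This $\mu_\beta$ is by definition the infinite-volume Gibbs measure along the subsequence, which is the assertion of the proposition.

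The one point requiring care — and the reason the Dovbysh--Sudakov theorem rather than an elementary compactness argument is needed — is that the replicas $\sigma^i/\sqrt N$ all sit on the unit sphere for finite $N$, but the norm of the vectors carrying the directing measure need not be preserved in the limit: the example $\frac{1}{N}\sum_{i=1}^N \delta_{e_i} \to \delta_0$ from the preceding discussion shows that mass can escape toward the center. A priori the limiting off-diagonal overlaps might not be realizable as inner products of vectors lying on the sphere, but Theorem~\ref{thm:DS} guarantees they are realized by a measure on the whole unit ball, the diagonal still being formally set to $1$. Verifying that the limit genuinely lies in $\mathcal{M}$ — that positive semi-definiteness and the diagonal-$1$ constraint survive — is immediate, as these are closed conditions in the product topology on $[-1,1]^{\N\times\N}$; the substantive input is entirely contained in Theorem~\ref{thm:DS}.
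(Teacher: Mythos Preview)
Your proof is correct and follows essentially the same route as the paper's: embed the hypercube isometrically in $\hilbert$, extract a convergent subsequence by compactness of $\mathcal{M}$, and invoke Theorem~\ref{thm:DS} to obtain the directing measure $\mu_\beta$. You supply more detail than the paper does---in particular the explicit embedding $\sigma\mapsto\sigma/\sqrt{N}$, the verification that $Q_N$ is weakly exchangeable, and the remark on why the limit may leave the unit sphere---but the argument is the same.
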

\begin{proof}
As noted earlier, the measure $\mu_{\beta,N}$ of $\eqref{eqn:gibbs}$ can be seen as a random probability on $\hilbert$ by embedding the hypercube isometrically in $\hilbert$. The sequence of weakly exchangeable matrices built by sampling replicas with this measure has a converging subsequence by the compactness of $\mathcal{M}$. Moreover, the limiting random matrix must have a directing measure $\mu_\beta$ by Theorem \ref{thm:DS}. 
\end{proof}
Before investigating the properties of infinite-volume Gibbs measures as probability measures on $\hilbert$, we turn to some relevant examples.\\

\subsection{Examples}
\subsubsection{The Gibbs measure of the SK model at high temperature}
Let $\mu_{\beta,N}$ be the Gibbs measure of the SK model for some temperature $\beta$ and for zero magnetic field. It was proven in \cite{ALR} that for $\beta<1$ 
\be
\lim_{N\to\infty} \overline{\mu_{\beta,N}^{(2)}}(q_{12}=0)=1\ .
\label{eqn:ALR}
\ee
It follows that
\begin{prop}
The limit $\mu_\beta$ of any converging subsequence of $\{\mu_{\beta,N}\}$ in $\mathcal{M}$ is the delta measure on the zero vector $ \mu_\beta=\delta_{0}$ a.s.
\end{prop}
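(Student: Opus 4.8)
The plan is to exploit \eqref{eqn:ALR} to show that, along the convergent subsequence, every off-diagonal overlap of the limiting weakly exchangeable matrix vanishes almost surely, and then to read off from the Dovbysh--Sudakov correspondence (Theorem \ref{thm:DS}) that the directing measure $\mu_\beta$ must be $\delta_0$.

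The one point that needs care is that the indicator of the event $\{q_{12}=0\}$ is not a continuous function on the space of overlap matrices, so \eqref{eqn:ALR} cannot be passed to the limit as it stands; instead I would work with the bounded continuous observable $q_{12}^2$, which is exactly of the form \eqref{eqn:h} with $s=2$. Since $|q_{12}|\leq 1$ on the hypercube, one has the pointwise bound $q_{12}^2\leq \mathbf{1}_{\{q_{12}\neq 0\}}$, hence
\[
\overline{\mu_{\beta,N}^{(2)}}(q_{12}^2)\;\leq\;1-\overline{\mu_{\beta,N}^{(2)}}(q_{12}=0)\;\longrightarrow\;0\qquad(N\to\infty)
\]
by \eqref{eqn:ALR} (for finite even $N$ the value $0$ is attained by the overlap, so there is no loss in restricting the subsequence to even $N$). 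Because $q_{12}^2$ is one of the generating functions of the topology on $\mathcal{M}$, convergence $\mu_{\beta,N}\to\mu_\beta$ along the subsequence yields $\overline{\mu_\beta^{(2)}}(q_{12}^2)=0$.

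It remains to deduce $\mu_\beta=\delta_0$ a.s. Since $q_{12}^2\geq 0$, the last identity forces $q_{12}=0$ almost surely under $\overline{\mu_\beta^{(2)}}$; by weak exchangeability the same holds for every $q_{ij}$ with $i\neq j$, so the limiting weakly exchangeable matrix equals the identity matrix almost surely, whose directing measure is $\delta_0$, as observed right after Theorem \ref{thm:DS}. Equivalently, one can argue quantitatively: writing $(e_k)$ for an orthonormal basis of $\hilbert$ and $A_{k\ell}=\mu_\beta\big((v,e_k)(v,e_\ell)\big)$ for the (random) second-moment operator of $\mu_\beta$, the Dovbysh--Sudakov representation $q_{12}\distrib(v_1,v_2)_\hilbert$ gives $\overline{\mu_\beta^{(2)}}(q_{12}^2)=\E\big[\sum_{k,\ell}A_{k\ell}^2\big]$, so $A=0$ a.s., whence $\mu_\beta\big((v,e_k)^2\big)=0$ for every $k$ a.s., i.e.\ $\mu_\beta=\delta_0$ a.s. The only genuine obstacle in the whole argument is the non-continuity of $\{q_{12}=0\}$ flagged above; once one passes through $q_{12}^2$, the rest is routine bookkeeping with the correspondence of Theorem \ref{thm:DS}.
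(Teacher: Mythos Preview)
Your argument is correct and follows the same route as the paper's proof: use \eqref{eqn:ALR} to show that the generating observables \eqref{eqn:h} vanish in the limit, conclude that the limiting random overlap matrix is the identity, and invoke the Dovbysh--Sudakov correspondence to identify the directing measure as $\delta_0$. Your treatment is in fact more careful than the paper's, since you explicitly address the non-continuity of the indicator $\{q_{12}=0\}$ by passing through the continuous observable $q_{12}^2$, and your quantitative variant via the second-moment operator $A$ is a nice addition.
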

\begin{proof}
By \eqref{eqn:ALR} any limit of functionals of the form \eqref{eqn:h} is $0$. Therefore, these functionals evaluated at any limit point $\mu_\beta$ of the sequence must be also $0$. Hence the overlap matrix constructed from a sampling of $\mu_\beta$ is the identity almost surely and the claim follows.
\end{proof}
A similar result holds for the replica symmetric region with non-zero magnetic field. In that case, the limiting measure is supported on a single vector of norm $0<q<1$, where $q$ is the solution to the self-consistency equation. \\

\subsubsection{ The Ruelle Probability Cascades}
We detail how the Ruelle Probability Cascades as constructed by Ruelle \cite{Ruelle} and reconsidered by Bolthausen and Sznitman \cite{BS} conveniently fit in $\mathcal{M}$. 

We recall that a Bolthausen-Sznitman coalescent $\Gamma=(\Gamma(t),t\geq 0)$ is a continuous-time Markov process on the space of partitions of $\N$. A partition of $\N$ is a collection of disjoint blocks of integers whose union is $\N$. At time $t=0$, we set $\Gamma(0)=\N=\{\{1\},\{2\},...\}$. The partition at time $t+t'$ conditionally on the partition at time $t$ is obtained by lumping blocks of $\Gamma(t)$ (see Proposition 1.1 of \cite{BS} for the precise transition probabilities).  The lumping is such that $\Gamma(t)$ is an exchangeable partition for every $t$ i.e. the probability of $i$ belonging to a given block is independent of $i$. One defines the stopping time $\tau_{ij}$ as the infimum over the time $t$ such that $i$ and $j$ belong to the same block $\Gamma(t)$. Since the partitions are nested by definition, the $\tau$'s satisfy the ultrametric inequality $\tau_{ij}\leq \max \{\tau_{ik},\tau_{jk}\}$ for any $i,j,k\in\N$ c.f. Equation 0.11 in \cite{BS}. Moreover, by the exchangeability of the partitions, the joint distribution of the $\tau$'s is also invariant under a permutation of the indices.

From the Bolthausen and Sznitman perspective, a Ruelle Probability Cascade (RPC) is a time change of the coalescent. Namely, let $\Gamma$ be the Bolthausen-Sznitman coalescent and $x:[0,1]\to[0,1]$ a cumulative distribution function on $[0,1]$, one considers the random matrix $Q=\{q_{ij}\}$
\be
q_{ij}:=x^{-1}(e^{-\tau_{ij}})
\label{eqn:rpc}
\ee
for the right-continuous inverse $x^{-1}(e^{-t}):=\inf\{q\geq 0: x(q)> e^{-t}\}$. We claim that \eqref{eqn:rpc} defines an element of $\mathcal{M}$. Since $\Gamma(0)=\N$, $q_{ii}=1$ for all $i$. The matrix $Q=\{q_{ij}\}$ is clearly symmetric. It is also positive semi-definite: since $\tau_{ij}\leq\max\{\tau_{ik},\tau_{jk}\}$, then $q_{ij}\geq \min\{q_{ik},q_{jk}\}$ and $Q$ is the covariance matrix of the Gaussian field on the tree defined by the ultrametric $\tau$. Finally, $Q$ is weakly exchangeable from the exchangeability property it inherits from the $\tau$'s.
\begin{df}
A Ruelle Probability Cascade (RPC) $\mu_x$ with parameter $x:[0,1]\to [0,1]$ is the element of $\mathcal{M}$ constructed from the Bolthausen-Sznitman coalescent through \eqref{eqn:rpc}.
\end{df}
In the case where the function $x$ is a step function, the definition coincides with the construction of Ruelle as stated in Theorem 2.2 of \cite{BS}. In fact, the directing measure $\mu_x$ is exactly the cascade of orthonormal vectors with weight given by Poisson-Dirichlet processes constructed by Ruelle.
The advantage of the above definition is the fact that the continuous cases as well as the degenerate cases have a well-defined meaning. For example, the case $x(q)\equiv 1$ corresponds to the matrix $q_{ij}=1$ for $i,j\in\N$ with directing measure $\mu_x=\delta_e$ for some vector $e$ of norm $1$ and $x(q)=0$ for $q\in[0,1)$ yields $Q=\text{Id}$ with directing measure $\mu_x=\delta_0$. 

A good topology for RPC's turns out to be the $L^1([0,1])$-norm on the space of the parameters $x$. Indeed, the Parisi functionals seen as functionals on RPC's are continuous in this topology \cite{guerra_cont, lp_rpc}. It is interesting to note that this topology corresponds exactly to the topology the RPC's inherit as a subset of $\mathcal{M}$.
\begin{prop}
The $\mathcal{M}$ topology on the set of RPC's is equivalent to the $L^1([0,1])$ topology on the parameters $x:[0,1]\to[0,1]$. 

In other words, $\mu_{x_n}$ converges to $\mu$ in $\mathcal{M}$ if and only if $\mu$ is a RPC with parameter $x$ and $x_n$ converges to $x$ in $L^1([0,1])$. In particular, the RPC's form a compact subset of $\mathcal{M}$. 
\label{prop:rpc}
\end{prop}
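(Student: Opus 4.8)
The plan is to exploit the fact that the parameter $x$ is already visible in a \emph{single} entry of the overlap matrix of $\mu_x$, while conversely $\mu_x$ depends on $x$ only through the Bolthausen--Sznitman coalescent, which is a fixed probability space not depending on $x$. For the first point, the restriction of the coalescent to the two elements $\{1,2\}$ is itself a two-block coalescent, whose only possible merger occurs at rate $\Lambda([0,1])=1$; hence $\tau_{12}$ is an $\mathrm{Exp}(1)$ variable and $U:=e^{-\tau_{12}}$ is uniform on $(0,1)$. By \eqref{eqn:rpc}, $q_{12}=x^{-1}(U)$, and since $x^{-1}$ is the right-continuous generalized inverse of the nondecreasing right-continuous function $x$, the variable $x^{-1}(U)$ has cumulative distribution function exactly $x$:
\[
\overline{\mu_x^{(2)}}(q_{12}\leq q)=x(q)\qquad\text{for all }q\in[0,1],
\]
so $x$ is determined by $\mu_x\in\mathcal{M}$. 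More generally, writing $\tau_{ij}$ for the pair-coalescence times, for any $s\in\N$ and exponents $k_{ij}\in\N$,
\[
\overline{\mu_x^{(s)}}\Big(\prod_{1\leq i<j\leq s}q_{ij}^{k_{ij}}\Big)=\E_\Gamma\Big[\prod_{1\leq i<j\leq s}\big(x^{-1}(e^{-\tau_{ij}})\big)^{k_{ij}}\Big],
\]
the expectation being over the coalescent alone.

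For the implication ``$x_n\to x$ in $L^1([0,1])$ $\Rightarrow$ $\mu_{x_n}\to\mu_x$ in $\mathcal{M}$'' I would argue as follows. $L^1$-convergence of the nondecreasing functions $x_n$ to (the monotone right-continuous version of) $x$ forces $x_n(q)\to x(q)$ at every continuity point $q$ of $x$ (compare the running averages $\frac1h\int_{q_0}^{q_0+h}x_n$ with $\frac1h\int_{q_0-h}^{q_0}x_n$), and therefore $x_n^{-1}(u)\to x^{-1}(u)$ at every continuity point $u$ of $x^{-1}$, that is, for Lebesgue-almost every $u\in(0,1)$. Since each $e^{-\tau_{ij}}$ has a density, almost surely over the coalescent the finitely many points $e^{-\tau_{ij}}$, $1\leq i<j\leq s$, are all continuity points of $x^{-1}$, so $x_n^{-1}(e^{-\tau_{ij}})\to x^{-1}(e^{-\tau_{ij}})$ simultaneously. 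As all overlaps lie in $[-1,1]$, bounded convergence applied to the displayed identity shows that every functional of the form \eqref{eqn:h} converges; since these functionals generate the topology of $\mathcal{M}$ and, being moments on the compact box $[-1,1]^{s(s-1)/2}$, separate its points, this yields $\mu_{x_n}\to\mu_x$ in $\mathcal{M}$.

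For the converse, suppose $\mu_{x_n}\to\mu$ in $\mathcal{M}$. Convergence of the functionals $\overline{\mu_{x_n}^{(2)}}(q_{12}^{k})$, $k\in\N$, is convergence of all moments of the law of $q_{12}$ on $[-1,1]$, hence weak convergence of that law; thus $x_n(q)=\overline{\mu_{x_n}^{(2)}}(q_{12}\leq q)$ converges, at every continuity point, to a nondecreasing right-continuous $[0,1]$-valued function $x$, and $x_n\to x$ Lebesgue-a.e., hence in $L^1([0,1])$ by bounded convergence. By the previous paragraph $\mu_{x_n}\to\mu_x$, so $\mu=\mu_x$ is the RPC with parameter $x$. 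In particular the set of RPC's is closed in the compact space $\mathcal{M}$ and therefore compact; equivalently, the admissible parameters form a compact subset of $L^1([0,1])$ (Helly's selection theorem together with bounded convergence) and $x\mapsto\mu_x$ is continuous by the ``if'' direction.

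The main obstacle I expect is the careful two-sided dictionary between $L^1$-convergence of the distribution functions $x_n$ and of their generalized inverses $x_n^{-1}$, together with the null-set bookkeeping ensuring the coalescence times $e^{-\tau_{ij}}$ avoid the (countably many) discontinuities of $x^{-1}$; the remaining ingredients are soft, namely the rate-$1$ coalescence of a single pair in the Bolthausen--Sznitman coalescent, the compactness and metrizability of $\mathcal{M}$, and the fact that the moment functionals \eqref{eqn:h} both generate the topology and separate points of $\mathcal{M}$.
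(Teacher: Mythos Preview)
Your argument is correct and in fact cleaner than the paper's in one respect. Both directions hinge on the same dictionary between $L^1$-convergence of the $x_n$ and a.e.\ convergence of the generalized inverses, and both forward directions are essentially the same. The real difference is in the converse. The paper invokes Skorohod's representation theorem to couple the overlap matrices so that $Q_n\to Q$ almost surely, and then uses the nontrivial fact that the family $\{\tau_{ij}\}_{i<j}$ is $\Gamma$-a.s.\ dense in $(0,\infty)$ to rebuild the limiting inverse $y=x^{-1}$ pointwise. Your route is more economical: you observe once and for all that the single entry $q_{12}$ has law with cumulative distribution exactly $x$ (because $\tau_{12}$ is $\mathrm{Exp}(1)$ in the Bolthausen--Sznitman coalescent), so the parameter is recovered from the two-replica marginal alone. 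Then moment convergence on $[-1,1]$ gives weak convergence of $x_n$, hence $L^1$-convergence by bounded convergence, and you close the loop by invoking the already-proved forward direction to identify the $\mathcal{M}$-limit as $\mu_x$. This avoids both Skorohod and the density of the pair-coalescence times. The paper's approach, on the other hand, is more self-contained at the level of the full overlap matrix and does not rely on the specific marginal law of $\tau_{12}$; it would generalize verbatim to any exchangeable coalescent whose pair-times are a.s.\ dense.
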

\begin{proof}
For right-continuous increasing functions on a compact interval, the $L^1$ convergence is equivalent to the pointwise convergence of the right-continuous inverse. Therefore if $x_n\to x$ in $L^1$ it follows from \eqref{eqn:rpc} that $\mu_{x_n}$ converges to $\mu_x$ in $\mathcal{M}$ and the $\Leftarrow$ part of the claim is proven. Now suppose that $\mu_{x_n}$ converges to $\mu$ in $\mathcal{M}$. Denote the respective random overlap matrices by $Q_n$ and $Q$. By the Skorohod's representation theorem (see e.g. Corollary 6.12 in \cite{Kallenberg}), there exists a probability space such that $Q_n\to Q$ almost surely. By \eqref{eqn:rpc}, it follows that for every pair $i,j$, $x_n^{-1}(e^{-\tau_{ij}})$ converges to some number $y_{ij}$. On the other hand, the collection of stopping time $\{\tau_{ij}\}_{i<j}$ is dense in $(0,\infty)$ $\Gamma$-a.s. This is a basic consequence of the form of the generator of $\Gamma$ c.f. Proposition 1.3 in \cite{BS}. We conclude that the collection of limits $y_{ij}$ defines a right-continuous increasing function that we call $y$. By construction $x_n^{-1}\to y$ pointwise so that $x_n\to y^{-1}$ in $L^1$. The $\Longrightarrow$ part follows by taking $x:=y^{-1}$.

\end{proof}
\section{Properties of the limiting Gibbs Measure}
\label{section gg}
It would be desirable to characterize the Gibbs measure of spin glasses simply in terms of its infinite-volume properties. The Parisi theory for the SK model predicts that the infinite-volume Gibbs measure ought to be a Ruelle Probability Cascade, as far as observables of the form \eqref{eqn:h} are concerned. An appealing strategy to single out the RPC's is to characterize elements in $\mathcal{M}$ that are stochastically stable, a property exhibited by a large class of spin glasses including the SK model \cite{AC,Contucci,Guerra_cavity}. In the first part of this section, we discuss how the stochastic stability of the Gibbs measure of a spin glass is expressed by the infinite-volume measure. Such measures turn out to be natural stochastically-invariant measure on $\hilbert$ and are believed to characterize the RPC's, see e.g. \cite{argaiz}. Second, in the spirit of Parisi and Talagrand \cite{Parisi}, we show that the directing measure, as a measure on $\hilbert$, must be singular whenever they are satisfied.

\subsection{Stochastic Stability and the Ghirlanda-Guerra Identities}
We define the stochastic stability property as in \cite{AC, Contucci}. Consider a measure $\mu_{\beta,N}$ of the form \eqref{eqn:gibbs}. We denote by $\mu_{\beta,\lambda,N}$ the perturbed measure where $H_N(\sigma)$ is replaced by the Hamiltonian $\tilde{H}_N(\sigma):=H_N(\sigma) + \lambda K(\sigma)$ where $K$ is a Gaussian field on $\{-1,+1\}^N$ independent of $H_N$ with covariance matrix $\E[K(\sigma)K(\sigma')]=\frac{1}{N}\E[H_N(\sigma)H_N(\sigma')]$. 
\begin{df}
The sequence of measure $\{\mu_{\beta,N}\}$ is said to be {\it stochastically stable} at $\beta>0$ and $\lambda>0$
if for any bounded continuous function $F_s(\{q_{ij}\})$ on $s$ replicas
$$ \lim_{N\to\infty}\overline{\mu_{\beta,\lambda,N}^{(s)}}(F_s(\{q_{ij}\})=\lim_{N\to\infty}\overline{\mu_{\beta,N}^{(s)}}(F_s(\{q_{ij}\}) \ .$$
\end{df}
It was proven by Contucci and Giardina that a large class of spin glasses, including the SK model, are stochastically stable for almost all $\beta$ \cite{Contucci}. We remark that the stochastic stability translates into a natural invariance property of the limiting Gibbs measure. 

Let $\kappa=(\kappa(v),\|v\|_{\hilbert}\leq 1)$ be a Gaussian field labeled by the unit ball of $\hilbert$ with covariance given by a function of the inner product on $\hilbert$ - in such a way that the quadratic form is positive definite. We define the mapping 
\begin{eqnarray}
\Phi_{\lambda}: \mathcal{M}& \to &\mathcal{M} \\
\mu & \mapsto &\frac{\mu(dv)\ e^{\lambda \kappa(v)}}{\int_{\hilbert}\mu(dv')\ e^{\lambda \kappa(v')}}
\end{eqnarray}
where the randomness of the image $\Phi_\lambda\mu$ is induced by the randomness of $\mu$ and the one of $\kappa$.
It can be checked that the function $v\mapsto \kappa(v)$ is measurable $\kappa$-a.s. Moreover, the normalization is finite a.s. since by Fubini
\be
\E_{\kappa}\left[\int_{\hilbert}\mu(dv')\ e^{\lambda \kappa(v')}\right]\leq e^{\lambda^2/2}\ .
\label{eqn:mu mapping}
\ee
Finally, since the law of $\kappa$ is invariant under isometry, the image is an element of $\mathcal{M}$.  The above remarks ensures that the mapping $\Phi_\lambda$ is well-defined.

Again since observables of the form \eqref{eqn:h} determine the directing measure in $\mathcal{M}$ and since this space is compact, the following directly holds.
\begin{prop}
Let $\{\mu_{\beta,N}\}$ be stochastically stable. There exists a
subsequence such that $\{\Phi_{\lambda}\mu_{\beta,N_k}\}$ and $\{\mu_{\beta,N_k}\}$ converge respectively to $\tilde{\mu}_{\beta}$, $\mu_\beta$ and for which
$$ \tilde{\mu}_\beta\distrib\mu_\beta\ .$$
\end{prop}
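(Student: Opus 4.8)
The plan is to realize the finite-volume perturbation $\mu_{\beta,\lambda,N}$ as the image of $\mu_{\beta,N}$ under the map $\Phi_\lambda$ once both are viewed as random probability measures on $\hilbert$, and then to pass to the limit using the compactness of $\mathcal{M}$ together with the hypothesis of stochastic stability. The only genuine content is this identification; everything afterwards is the soft compactness-plus-density argument alluded to by the phrase ``directly holds''.

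First I would make the identification precise. For a mean-field model on the hypercube one has $\E[H_N(\sigma)H_N(\sigma')] = N\,\xi(q_{\sigma\sigma'})$ for a fixed function $\xi$ (for instance $\xi(t)=t^2$ in the SK case), so the perturbing field $K$ has covariance $\E[K(\sigma)K(\sigma')]=\xi(q_{\sigma\sigma'})$, a function of the overlap that does not depend on $N$. Embedding the hypercube isometrically in $\hilbert$, the images $e_\sigma$ satisfy $(e_\sigma,e_{\sigma'})_\hilbert=q_{\sigma\sigma'}$, so the restriction to $\{e_\sigma\}$ of a Gaussian field $\kappa$ on the unit ball with covariance $\E[\kappa(v)\kappa(v')]=\xi\big((v,v')_\hilbert\big)$ has exactly the law of $K$, up to the harmless rescaling needed to match $e^{\lambda\kappa(v)}$ with the weight $e^{-\beta\lambda K(\sigma)}$ appearing in \eqref{eqn:gibbs} with $\tilde H_N$ in place of $H_N$ (this only multiplies $\xi$ by a positive constant and keeps the quadratic form positive definite). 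Consequently, as elements of $\mathcal{M}$, $\Phi_\lambda\mu_{\beta,N}\distrib\mu_{\beta,\lambda,N}$: sampling $s$ replicas and forming the overlap matrix gives the same law on either side, the randomness being that of $H_N$ together with the independent field ($\kappa$, respectively $K$), and $\Phi_\lambda$ is legitimately defined here since the normalization is a.s.\ finite by \eqref{eqn:mu mapping}.

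Next I would extract the subsequence. By compactness of $\mathcal{M}$ there is a subsequence along which $\mu_{\beta,N_k}\to\mu_\beta$; applying compactness once more along this subsequence, pass to a further subsequence (still denoted $N_k$) along which $\Phi_\lambda\mu_{\beta,N_k}\to\tilde\mu_\beta$. Fix a monomial $F_s=\prod_{i<j}q_{ij}^{k_{ij}}$, which is a bounded continuous function on the compact space of $s\times s$ overlap matrices and hence an admissible test function. Convergence in $\mathcal{M}$ gives $\overline{\mu_\beta^{(s)}}(F_s)=\lim_k\overline{\mu_{\beta,N_k}^{(s)}}(F_s)$ and $\overline{\tilde\mu_\beta^{(s)}}(F_s)=\lim_k\overline{(\Phi_\lambda\mu_{\beta,N_k})^{(s)}}(F_s)=\lim_k\overline{\mu_{\beta,\lambda,N_k}^{(s)}}(F_s)$, the last equality being the identification of the previous paragraph. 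Stochastic stability forces these two limits to coincide, so $\overline{\tilde\mu_\beta^{(s)}}(F_s)=\overline{\mu_\beta^{(s)}}(F_s)$ for every such $F_s$. Since the observables of the form \eqref{eqn:h} determine the law of the weakly exchangeable overlap matrix, and an element of $\mathcal{M}$ is precisely such a law, this gives $\tilde\mu_\beta\distrib\mu_\beta$.

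I expect the main (indeed only) obstacle to be the identification step: recognizing that, inside $\mathcal{M}$, the perturbed finite-volume Gibbs measure is $\Phi_\lambda$ applied to the unperturbed one, which rests on the fact that the perturbing Hamiltonian $K$ has covariance a function of the overlap, hence lifts to a field indexed by $\hilbert$. A minor point to watch is the precise reading of the stochastic stability hypothesis: it should be used in the form that along any subsequence on which $\overline{\mu_{\beta,N}^{(s)}}(F_s)$ converges the perturbed average converges to the same value (equivalently, that the difference of the two averages tends to $0$), so that it applies along the subsequence produced by compactness rather than only for the full sequence.
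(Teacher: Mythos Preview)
Your proposal is correct and matches the paper's own argument, which is nothing more than the one-line remark preceding the proposition: compactness of $\mathcal{M}$ furnishes the subsequence, and the fact that the observables \eqref{eqn:h} determine the directing measure yields the identification of the limits. You have simply spelled out the identification $\Phi_\lambda\mu_{\beta,N}\distrib\mu_{\beta,\lambda,N}$ that the paper leaves implicit in its set-up of $\Phi_\lambda$, and your caveat about reading stochastic stability as ``the difference of averages tends to $0$'' is the right way to apply the hypothesis along the extracted subsequence.
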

It would be interesting to show whether or not the mapping $\Phi_\lambda$ is continuous in the $\mathcal{M}$ topology. If so, limit points of stochastically stable measures would be fixed points of $\Phi_\lambda$. 

This mapping is a natural transformation of a probability measure on $\hilbert$ where the weight associated with each vector is scaled by a function of the Gaussian variable of this point, when $\hilbert$ is seen as a Gaussian Hilbert space. It is the continuous generalization of the correlated evolution for competing particle systems or ROSt's as considered in \cite{RA, argaiz} when the directing measure is discrete. In that case, the weight of a vector is the position of a particle and the isometry is replaced by the ordering of the positions. The mapping $\Phi_\lambda$ corresponds to the incrementation of the positions of the particles correlated through the overlap matrix. It seems to be a fundamental issue to characterize invariant measure under \eqref{eqn:mu mapping}. In the discrete setting, it was shown under some robustness conditions that the Ruelle Probability Cascades are the only ones \cite{argaiz}, thereby singling out the ultrametric structure of the directing measure.

Deep results can already be derived from the stochastic stability property. One of the most studied is the fact that stochastic stability implies non-trivial relations between observables known as the Ghirlanda-Guerra identities and the Aizenman-Contucci polynomials \cite{GG,AC, Contucci, lp_gg}. In this paper, we focus on the strongest form of these identities.
\begin{df}
A sequence of random probability measures $\{\mu_N\}$ is said to satisfy the Ghirlanda-Guerra identities if
\begin{multline}
\overline{\mu_N^{(s+1)}}(F(q_{1,s+1}; \{q_{ij}\}_{i,j<s})=\\ \frac{1}{s}\overline{\mu_N}\times\overline{\mu_N^{(s)}}(F(q_{1,s+1}; \{q_{ij}\}_{i,j<s})+\frac{1}{s}\sum_{j=2}^{s} \overline{\mu_N^{(s)}}(F(q_{1j}; \{q_{ij}\}_{i,j<s}) + o(1)
\label{eqn gg1}
\end{multline}
for all $s\in\N$ and all bounded continuous functions $F$ that depend on the overlaps of $s$ sampled vectors $\{q_{ij}\}_{i,j<s}$ and of the overlap of an $s+1$-th vector with the first sampled one $q_{1,s+1}$.  
\end{df}
Ghirlanda and Guerra proved that the identities hold for the Gibbs measure of the SK model at almost all $\beta>0$. Since the functionals appearing in the identities are continuous functions in $\mathcal{M}$, it follows that the identities are fulfilled by any limiting Gibbs measure.
\begin{prop}
If $\{\mu_N\}\subset \mathcal{M}$ satisfies \eqref{eqn gg1}, then any of its limit points $\mu$ must obey
\begin{multline}
\overline{\mu^{(s+1)}}(F(q_{1,s+1}; \{q_{ij}\}_{i<j})=\\ \frac{1}{s}\overline{\mu}\times\overline{\mu^{(s)}}(F(q_{1,s+1}; \{q_{ij}\}_{i<j})+\frac{1}{s}\sum_{j=2}^{s} \overline{\mu^{(s)}}(F(q_{js}; \{q_{ij}\}_{i<j})
\label{eqn gg}
\end{multline}
for all $s\in\N$ and all bounded continuous function $F$ as above.
\end{prop}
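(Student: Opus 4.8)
The plan is to treat this as a pure continuity statement. For each fixed bounded continuous $F$, every term appearing on either side of \eqref{eqn gg1} is a continuous functional of $\mu\in\mathcal{M}$ in the topology generated by the monomials \eqref{eqn:h}; hence, choosing a subsequence $\mu_{N_k}\to\mu$ (which exists by compactness of $\mathcal{M}$) and letting $k\to\infty$ in \eqref{eqn gg1}, each term converges to the same term evaluated at $\mu$ while the remainder $o(1)$ vanishes, leaving exactly \eqref{eqn gg}.

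The first step is to reduce to polynomial, and then to monomial, test functions. The overlaps on which $F$ depends take values in a compact cube $[-1,1]^{m}$ for some finite $m$, so by Stone--Weierstrass there are polynomials $P_n\to F$ uniformly. Each operation occurring in \eqref{eqn gg1} --- the replica averages $\overline{\mu^{(s+1)}}$ and $\overline{\mu^{(s)}}$ and the decoupled average $\overline{\mu}\times\overline{\mu^{(s)}}$ --- is an integration against a probability measure on overlap values, so replacing $F$ by $P_n$ moves each term by at most $\|F-P_n\|_{\infty}$, uniformly in $N$ and in the elements of $\mathcal{M}$ involved. It therefore suffices to prove \eqref{eqn gg} for a monomial $F(t;\{x_{ab}\})=t^{k}\prod_{a<b}x_{ab}^{k_{ab}}$.

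For such an $F$ I would write out each term explicitly as a product of functionals of the form \eqref{eqn:h}. The left-hand side is $\overline{\mu^{(s+1)}}\big(q_{1,s+1}^{k}\prod_{a<b}q_{ab}^{k_{ab}}\big)$, which is of the form \eqref{eqn:h} on $s+1$ replicas; each summand is $\overline{\mu^{(s)}}\big(q_{1j}^{k}\prod_{a<b}q_{ab}^{k_{ab}}\big)$, again of the form \eqref{eqn:h}, now on $s$ replicas; and the decoupled term factors as the product $\overline{\mu^{(2)}}\big(q_{12}^{k}\big)\cdot\overline{\mu^{(s)}}\big(\prod_{a<b}q_{ab}^{k_{ab}}\big)$ of two functionals of the form \eqref{eqn:h}. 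Since the topology of $\mathcal{M}$ is by construction the one making the \eqref{eqn:h}'s continuous, and since finite sums and products of continuous real functions are continuous, every term of \eqref{eqn gg1} is a continuous functional on $\mathcal{M}$; the convergence argument of the first paragraph then gives \eqref{eqn gg} for monomials, and the uniform approximation of the second paragraph promotes it to all bounded continuous $F$.

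I do not anticipate a real obstacle here; the only thing demanding attention is the replica bookkeeping --- identifying which index plays the distinguished role, noting that the decoupled average $\overline{\mu}\times\overline{\mu^{(s)}}$ becomes a \emph{product} of two sampling functionals (so that its continuity on $\mathcal{M}$ follows from continuity of products rather than from a single monomial), and recognizing that relabelings of replicas between \eqref{eqn gg1} and \eqref{eqn gg}, such as the passage from $q_{1j}$ to $q_{js}$ in the sum, are harmless since weak exchangeability is built into the definition of $\mathcal{M}$.
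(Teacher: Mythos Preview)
Your proposal is correct and follows exactly the approach the paper indicates: the paper does not give a written-out proof of this proposition, but only the one-sentence justification preceding it, namely that ``the functionals appearing in the identities are continuous functions in $\mathcal{M}$.'' Your Stone--Weierstrass reduction to monomials, the identification of each term with functionals of the form \eqref{eqn:h} (or products thereof, for the decoupled term), and the passage to the limit along a convergent subsequence are precisely the details that make that sentence rigorous.
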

The interest of the last observation is that one can now study the Ghirlanda-Guerra identities as an identity on the sampling probability measure on $\hilbert$ as opposed to relations on infinite overlap matrices.

\subsection{Singularity of the Gibbs Measure}
What does it mean for a probability measure on $\hilbert$ to satisfy the Ghirlanda-Guerra identities ? Parisi and Talagrand proved that the distribution on $[-1,1]$ induced by the overlap of two sampled vectors must be discrete \cite{Parisi}. We apply the same idea to investigate the singularity of the full measure in $\mathcal{M}$. We remark that the Ruelle Probability Cascades are known to satisfy the Ghirlanda-Guerra identities \cite{Bovier, lp_gg}. The directing measure of a RPC being pure point, one could conjecture that this is always the case whenever the identities are fulfilled. We prove a weaker version of this assertion.

\begin{lem}
Consider the event $\mathcal{A}_s:=\{q_{1s}\neq q_{1i} \ \forall \ 2\leq i\leq s-1\}$ that the inner product of the $s$-th sampled vector with the first sampled one differs from all the previous ones. If $\mu$ satisfies the identities \eqref{eqn gg}, then for any realization of $\mu$ a.s.
$$ \liminf_{s\to \infty} \mu^{(s)}(\mathcal{A}_s)= 0 \ .$$
\label{lem:gg}
\end{lem}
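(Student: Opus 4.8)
The plan is to first reduce the almost-sure assertion to a bound on the averaged quantity $\overline{\mu^{(s)}}(\mathcal{A}_s)$, and then to read that bound off from a single application of the identities \eqref{eqn gg}. For the reduction, note that $\mu^{(s)}(\mathcal{A}_s)$ is a nonnegative random variable (a function of the realization of the random measure $\mu$) with $\E\big[\mu^{(s)}(\mathcal{A}_s)\big]=\overline{\mu^{(s)}}(\mathcal{A}_s)$, so Fatou's lemma gives
\[
\E\Big[\liminf_{s\to\infty}\mu^{(s)}(\mathcal{A}_s)\Big]\ \le\ \liminf_{s\to\infty}\overline{\mu^{(s)}}(\mathcal{A}_s).
\]
Hence it suffices to show that $\overline{\mu^{(s)}}(\mathcal{A}_s)\to 0$; I will in fact establish the quantitative bound $\overline{\mu^{(s)}}(\mathcal{A}_s)\le \tfrac{1}{s-1}$, after which $\liminf_s \mu^{(s)}(\mathcal{A}_s)$ is a nonnegative random variable of zero expectation and therefore vanishes $\mu$-a.s.

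To obtain the bound, I would apply \eqref{eqn gg} with $s-1$ in place of $s$, so that the distinguished overlap is $q_{1s}$ (the $s$-th sampled vector against the first) and the remaining replicas are $v_1,\dots,v_{s-1}$. Choosing the test function $F(q_{1s};\{q_{ij}\})=\prod_{i=2}^{s-1}\mathbf{1}[q_{1s}\ne q_{1i}]$ (which depends only on $q_{1s}$ and on the sub-collection $q_{12},\dots,q_{1,s-1}$), the left-hand side of \eqref{eqn gg} is exactly $\overline{\mu^{(s)}}(\mathcal{A}_s)$. On the right-hand side, the first term equals $\tfrac{1}{s-1}$ times an average of the $[0,1]$-valued quantity $F$, hence is at most $\tfrac{1}{s-1}$; and, crucially, \emph{every term of the sum vanishes}, because substituting the already-present overlap $q_{1j}$ (for $2\le j\le s-1$) into the distinguished slot turns the factor indexed by $i=j$ into $\mathbf{1}[q_{1j}\ne q_{1j}]=0$. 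This yields $\overline{\mu^{(s)}}(\mathcal{A}_s)\le\tfrac{1}{s-1}$.

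The one technical wrinkle is that $\mathbf{1}[x\ne y]$ is not continuous, whereas \eqref{eqn gg} is stated only for bounded continuous $F$. I would resolve this by approximating the indicator from below: take continuous $f_n\colon\R\to[0,1]$ with $f_n(0)=0$ and $f_n(t)\uparrow\mathbf{1}[t\ne 0]$ (for instance $f_n(t)=\min(n|t|,1)$), and set $F_n(q_{1s};\{q_{ij}\})=\prod_{i=2}^{s-1}f_n(q_{1s}-q_{1i})$, which is bounded and continuous in the overlaps. Applying \eqref{eqn gg} to each $F_n$, the diagonal terms of the sum still vanish (again because $f_n(0)=0$), so $\overline{\mu^{(s)}}\big(F_n(q_{1s};\cdot)\big)\le\tfrac{1}{s-1}$ for every $n$; letting $n\to\infty$ and invoking monotone convergence on the left (the $F_n(q_{1s};\cdot)$ increase pointwise to $\mathbf{1}_{\mathcal{A}_s}$) gives $\overline{\mu^{(s)}}(\mathcal{A}_s)\le\tfrac{1}{s-1}$, which combined with the Fatou reduction above completes the argument.

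The main (and essentially only) obstacle is this last continuity point, and it is resolved precisely because the approximants vanish at the origin — which is exactly what forces the ``diagonal'' contributions to the Ghirlanda--Guerra sum to be zero at every finite stage of the approximation. Everything else is a direct consequence of the combinatorial structure of \eqref{eqn gg}.
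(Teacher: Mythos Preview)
Your argument is correct and follows essentially the same route as the paper's own proof: approximate the indicator by bounded continuous functions chosen so that the ``diagonal'' substitutions in the Ghirlanda--Guerra sum are trivial, read off the bound $\overline{\mu^{(s)}}(\mathcal{A}_s)\le \tfrac{1}{s-1}$, and then pass to the almost-sure statement via Fatou's lemma. The only cosmetic differences are that the paper works with the complementary event $\mathcal{A}_s^c$ (so the diagonal terms equal $1$ rather than $0$) and uses a $\min$-based approximant with dominated convergence instead of your product approximant with monotone convergence.
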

\begin{proof}
Following \cite{Parisi}, for $\delta>0$ we define the function
$$ F_\delta(q_{1s}; \{q_{1i}\}_{i<s-1})=1-\min\{1, \frac{1}{\delta} |q_{1s}-q_{1i}| \ i=2,...,s-1\} \ .$$
This a bounded continuous function of the entries. Moreover, it is easily checked that as $\delta\to 0$ $F_\delta$ converges pointwise to the indicator function $\chi_{\mathcal{A}_s^c}$ of $\mathcal{A}_s^c=\{\exists \ 2\leq i\leq s-1 : q_{1s}= q_{1i} \}$. Plainly, $\chi_{\mathcal{A}^c_s}(q_{1i}, \{q_{1i}\}_{i<s-1})\equiv1$.

Plugging $F_\delta$ into \eqref{eqn gg} and using dominated convergence as $\delta\to 0$, we get
$$ \overline{\mu^{(s)}}(\mathcal{A}_s^c)=\\ \frac{1}{s-1}\ \overline{\mu}\times\overline{\mu^{(s-1)}}(\mathcal{A}_s^c)+\frac{s-2}{s-1}.$$
Hence $\lim_{s\to\infty }\overline{\mu^{(s)}}(\mathcal{A}_s)=0$ a.s. Moreover by Fatou's Lemma
$$0=\lim_{s\to\infty}\overline{\mu^{(s)}(\mathcal{A}_s)}\geq \overline{\liminf_{s\to\infty }\mu^{(s)}(\mathcal{A}_s)}\geq 0 $$
and the desired conclusion holds.
\end{proof}

\begin{cor}
If $\mu\in\mathcal{M}$ satisfies the Ghirlanda-Guerra identities \eqref{eqn gg}, then it must be singular in the following sense:
Let $B\subset{\R^d}$ be a closed ball in $\R^d$ for any $d\in\N$. If $\mu(B)>0$, the conditional probability $\mu(\ |B)$ is singular a.s. with respect to the Lebesgue measure on $B$.
\label{cor:gg} 
\end{cor}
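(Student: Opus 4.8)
The plan is to argue by contradiction from Lemma \ref{lem:gg}. Regard $\R^d$ as a subspace of $\hilbert$ (any isometric copy will do, since the statement is made up to isometry) and let $P_{\R^d}$ be the orthogonal projection onto it. We may assume the ball $B=\bar B(c,r)\subset\R^d$ has $r>0$, since for $r=0$ the measure $\mu(\cdot\,|B)$ is a Dirac mass and the conclusion is immediate. Suppose the corollary were false. Then there is a set $\mathcal E$ of realizations of $\mu$ with $\prob(\mathcal E)>0$ on which $\mu(B)>0$ and the Lebesgue decomposition of $\mathbf 1_B\,\mu$, viewed as a finite measure on $\R^d$, has a nonzero absolutely continuous part $\rho$; write $\beta(\mu):=\rho(\R^d)>0$ for its mass. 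I would first record that on $\mathcal E$ one necessarily has $\mu\big((\R^d)^\perp\big)<1$: if $\mu$ were carried by $(\R^d)^\perp$, then $\mathbf 1_B\,\mu$ would be carried by $B\cap(\R^d)^\perp\subseteq\{0\}$, forcing $\mu(\cdot\,|B)=\delta_0$, which is singular with respect to Lebesgue measure on $B$, contradicting membership in $\mathcal E$. Hence $p(\mu):=\mu\{v\in\hilbert:P_{\R^d}v\neq0\}>0$ on $\mathcal E$.

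The heart of the argument is a lower bound on $\mu^{(s)}(\mathcal A_s)$ that does not deteriorate as $s\to\infty$, valid for every fixed realization of $\mu$ in $\mathcal E$. Sampling $v_1,v_2,\dots$ i.i.d.\ from $\mu$, so that $q_{1i}=(v_1,v_i)_\hilbert$, I would condition on $v_1$ and write $w:=P_{\R^d}v_1$. For $u\in\R^d$ one has $(v_1,u)_\hilbert=(w,u)$, so the conditional law of $q_{1s}$ given $v_1$ is the image of $\mu$ under the linear functional $\ell_w\colon u\mapsto(w,u)_\hilbert$, and this image dominates $(\ell_w)_*\rho$, a measure on $\R$ of total mass $\beta(\mu)$. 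When $w\neq0$ the functional $\ell_w$ restricted to $\R^d$ is onto $\R$, so $(\ell_w)_*\rho$ is absolutely continuous with respect to Lebesgue measure on $\R$ (by Fubini), in particular atomless. Therefore, conditionally on $v_1$ with $w\neq0$ and on $v_2,\dots,v_{s-1}$, the probability that $q_{1s}$ avoids the finite set $\{q_{12},\dots,q_{1,s-1}\}$ is at least $(\ell_w)_*\rho(\R)=\beta(\mu)$. Integrating over the event $\{P_{\R^d}v_1\neq0\}$ and over $v_2,\dots,v_{s-1}$ gives $\mu^{(s)}(\mathcal A_s)\geq\beta(\mu)\,p(\mu)$ for every $s$, whence $\liminf_{s\to\infty}\mu^{(s)}(\mathcal A_s)\geq\beta(\mu)\,p(\mu)>0$ on $\mathcal E$. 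Since $\prob(\mathcal E)>0$, this contradicts Lemma \ref{lem:gg}, which gives $\liminf_{s\to\infty}\mu^{(s)}(\mathcal A_s)=0$ for $\prob$-a.e.\ realization of $\mu$; the corollary follows. The measurability of $\mathcal E$ and of $\mu\mapsto\beta(\mu),p(\mu)$ is routine and I would not dwell on it.

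I expect the only real difficulty to be precisely this translation of ``$\mu(\cdot\,|B)$ carries an absolutely continuous part'' into the uniform-in-$s$ lower bound on $\mu^{(s)}(\mathcal A_s)$: the key observation is that, after conditioning on a generic first sampled vector $v_1$, the overlap with a fresh sample $v_s$ still carries a non-atomic mass $\beta(\mu)$, and being atomless this mass is insensitive to the finitely many overlaps $q_{12},\dots,q_{1,s-1}$ already recorded. The secondary point, the degenerate possibility that $\mu$ concentrates on $(\R^d)^\perp$, is disposed of by the $\mu(\cdot\,|B)=\delta_0$ remark above; beyond that everything is soft.
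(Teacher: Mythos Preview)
Your argument is correct, and it takes a genuinely different route from the paper's. The paper conditions \emph{all} $s$ samples to lie in $B$ (the event $\mathcal B_s=\{v_i\in B,\ i\le s\}$), claims from Lemma~\ref{lem:gg} that $\liminf_s \mu^{(s)}(\mathcal A_s\,|\,\mathcal B_s)=0$, and then observes that if $\mu(\,\cdot\,|B)$ is absolutely continuous on $B$ then the product measure on $B^s$ is absolutely continuous and $\mathcal A_s^c$ is Lebesgue-null, forcing $\mu^{(s)}(\mathcal A_s\,|\,\mathcal B_s)=1$. You instead work with the unconditional $\mu^{(s)}(\mathcal A_s)$: you restrict only the last sample $v_s$ to the absolutely continuous part $\rho$ and require $v_1$ to have nonzero $\R^d$-projection, then push $\rho$ forward by the overlap functional to get an atomless measure of fixed mass $\beta(\mu)$, yielding the uniform bound $\mu^{(s)}(\mathcal A_s)\ge \beta(\mu)\,p(\mu)$ directly in contradiction with Lemma~\ref{lem:gg}. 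Your approach buys two things: it negates singularity correctly (nonzero absolutely continuous part, rather than the stronger ``$\mu(\,\cdot\,|B)$ is absolutely continuous'' assumed in the paper), and it avoids passing from $\liminf_s\mu^{(s)}(\mathcal A_s)=0$ to the conditional version, a step that is not immediate since $\mu^{(s)}(\mathcal B_s)=\mu(B)^s\to 0$. The paper's argument is a touch shorter once one grants that conditional step. One cosmetic remark: your side observation that $p(\mu)>0$ on $\mathcal E$ can be obtained even more directly by noting that $\rho$ itself, being absolutely continuous on $\R^d$, gives no mass to $\{0\}$ and hence charges $\R^d\setminus\{0\}\subset\{P_{\R^d}v\neq 0\}$.
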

\begin{proof}
Let $\mathcal{B}_{s}=\{v_i\in B\ \forall i=1,...,s\}$ be the event that $s$ vectors are sampled in $B$. Note that $\mu^{(s)}(\mathcal{B}_{s})>0$ whenever $\mu(B)>0$ so the conditional measure $\mu^{(s)}(\ |\mathcal{B}_{s})$ is well-defined.  It immediately follows from Lemma \ref{lem:gg} that a.s.
\be
\liminf_{s\to \infty} \mu^{(s)}(\mathcal{A}_s\ |\mathcal{B}_{s})= 0 \ .
\label{eqn:mucond}
\ee
Now suppose that with positive probability $\mu(\ |B)$ is absolutely continuous with respect to the Lebesgue measure on $B$. Then so is $\mu^{(s)}(\ \ |\mathcal{B}_{s})$ for the Lebesgue measure $\nu_B^{(s)}$ on $B^s$. In particular the event $\mathcal{A}_s^c=\{\exists \ 2\leq i\leq s-1 : q_{1s}= q_{1i} \}$ has probability $\mu^{(s)}(\mathcal{A}_s^c\ |\mathcal{B}_{s})=0$ for all $s$ since $\nu_{B}^{(s)}(\mathcal{A}_s^c)=0$. We conclude that $\lim_{s\to\infty} \mu^{(s)}(\mathcal{A}_s\ |\mathcal{B}_{s})=1$ with positive probability thereby contradicting \eqref{eqn:mucond}.
\end{proof}

\end{document}